\newcommand{\imp}{\!\rightarrow\!}
\newcommand{\mpn}{\medskip\par\noindent}
\newcommand{\pa}{{\sf PA}}
\newcommand{\proves}{\vdash}
\newcommand{\gn}[1]{\ulcorner {#1} \urcorner }
\newcommand{\lc}[1]{#1\!\!:\!\!}
\newtheorem{Prop}{\bf Proposition}
\newtheorem{Theor}{\bf Theorem}
\newtheorem{Lemma}{\bf Lemma}
\newtheorem{Coro}{\bf Corollary}
\newtheorem{Fact}{\bf Fact.}
\newtheorem{Remark}{\bf Remark}
\newtheorem{Claim}[enumi]{Claim}
\newtheorem{defin}{\bf Definition}
\newenvironment{definition}{\begin{defin} \em}{\end{defin}}
\newtheorem{exam}{\bf Example}
\newtheorem{notat}{\bf Notation.}
\newenvironment{proof}{{\bf Proof.}}{\hfill $\slot$}
\newcommand{\slot}{\hfill \mbox{$\Box$}\vspace{\parskip}\\}
\newtheorem{Comment}{\bf Comment}
\begin{document}

\title{Serial Properties, Selector Proofs, and \\ the Provability of Consistency.}

\author{Sergei Artemov\\ \\
 {\small The Graduate Center, the City University of New York}\\
{\small  365 Fifth Avenue, New York City, NY 10016}\\
{\small {\tt sartemov@gc.cuny.edu}} }
\date{\today}
\maketitle

\begin{abstract}
For Hilbert, the consistency of a formal theory $T$ is an infinite series of statements ``$D$ is free of contradictions" for each derivation $D$ and a consistency proof is i) an operation that, given $D$, yields a proof that $D$ is free of contradictions, and ii) a proof that (i) works for all inputs $D$. Hilbert's two-stage approach to proving consistency naturally generalizes to the notion of a finite proof of a series of sentences in a given theory. Such proofs, which we call selector proofs, have already been tacitly employed in mathematics. Selector proofs of consistency, including Hilbert's epsilon substitution method, do not aim at deriving the G\"odelian consistency formula ${\sf Con}_T$ and are thus not precluded by G\"odel's second incompleteness theorem. We give a selector proof of consistency of Peano Arithmetic \pa\ and formalize this proof in \pa. 
\end{abstract}

\section{Introduction}
\subsection{Hilbertian consistency}\label{Hcon}
In his foundational works, Hilbert viewed a consistency proof as follows:
\begin{quote}
``What is required for a consistency proof is an operation which, given a formal
derivation, transforms such a derivation into one of a special form, plus proofs
that the operation in fact succeeds in every case and that proofs of the special
kind cannot be proofs of an inconsistency."(\cite{Zach07}, cf. also \cite{Zach16})
\end{quote}
This quote characterizes the format of Hilbert's {\it epsilon substitution method}, cf. \cite{HB34}. It consists of an operation which given a derivation $D$ produces evidence, a chain of reductions to some normal form, that $D$ is free of contradictions followed by a proof that this operation terminates with success for each input $D$. 
Two fundamental observations can be made here:
\renewcommand{\labelenumi}{\text{\roman{enumi})}}
\begin{enumerate}
\item Consistency of a formal theory $T$ is treated as the set of claims 
\begin{equation}\label{defcon}
\mbox{``$D$ is not a derivation of a contradiction in $T$"} 
\end{equation}
with a parameter $D$ ranging over derivations in $T$. In terms of G\"odel codes, consistency is the series of arithmetical sentences 
\begin{equation}\label{set}
C(0),\ C(1),\ C(2),\ \ldots\ \  
\end{equation}
with $C(n)$ being 
$
\mbox{\it ``$n$ is not a code of a derivation of a contradiction in $T$."} 
$
\item A proof of consistency is two-stage: an operation (which we suggest calling {\it selector}) that given $D$ certifies that $D$ is contradiction-free plus a verification of this operation.  
\end{enumerate}
Whereas proofs $D$ are necessarily formal, the verifications from (ii) may be contentual pieces of mathematical reasoning. They need not necessarily be formal. 

Note that this view of consistency is quite different from a popular later representation of consistency of $T$ by a single arithmetical formula 
${\sf Con}_T$:
\begin{equation}\label{sformula}
\forall x C(x).
\end{equation}
Indeed, (\ref{set}) and (\ref{sformula}) are equivalent in the standard model of arithmetic. This engenders the temptation to use the formula (\ref{sformula}) in lieu of the series (\ref{set}) for the analysis of the provability of consistency. This would be a mistake, however, since the provability in $T$ requires considering all models of $T$ and there (\ref{set}) and (\ref{sformula}) are not equivalent. In particular, for $T=\pa$, (\ref{set}) holds in any model, but  (\ref{sformula}) fails in some models of \pa. 

We are not claiming that Hilbert himself explicitly formulated (i). However, in his works, he acted according to (i). Hilbert's epsilon substitution method aims at proving
\[ \mbox{\it ``for all $D$ there is a proof that $D$ is consistent"} \]
which corresponds to format (\ref{set}) and is mathematically and conceptually different from establishing 
\[
\mbox{\it ``there is a proof that for all $D$, $D$ is consistent"}
\]
corresponding for format (\ref{sformula}). So, 
 Hilbert's approach to proving consistency fits to consistency as a series (\ref{set}) and does not fit to consistency as a formula (\ref{sformula}). 





\subsection{Proving serial properties}\label{HvsG}
We extend Hilbert's approach to proving consistency to a general definition of a proof of a serial property in a given theory.
By a {\it serial property} ${\cal F}$ we understand a series of claims $\{ F_0, F_1,\ldots,F_n,\ldots \}$.  
Suppose, each of $F_n$ is a sentence in the language of some theory $\cal S$. 
What is the right mathematical notion of a proof of $\cal F$ in $\cal S$? 
\medskip\par
One might think this is the instance provability:  
\begin{equation}\label{instance}
\mbox{\it for each n, $\cal S$ proves $F_n$}.
\end{equation}
It, however, is not.  The instance provability alone
is too weak since it does not address the issue of whether (\ref{instance}) itself is provable in $\cal S$. 
For example, the consistency proof for  \pa\ via truth in the standard model yields instance provability of the consistency (\ref{set}):
\begin{quote} {\it Let $D$ be a formal derivation in \pa. Since all formulas from $D$ are true in the standard model and $(0\!=\!1)$ is not true, the latter is not in $D$. 
Therefore $C(n)$ holds for each numeral $n$, hence it is provable in \pa\ as a true primitive recursive statement.}
\end{quote}
However, the above is not a proof in \pa\ since the notion ``true in the standard model" is not formalizable in \pa. 
So, in addition to instance provability, some sort of verification of (\ref{instance}) by means of $S$ is also needed.
\medskip\par
Quantification on parameters is too strong for proving serial properties. 
Let  ${\cal F}$ be a serial property $\{F(0),F(1),\ldots, F(n),\ldots\}$ for some arithmetical formula $F(x)$. What is wrong with assuming that  ``$\cal F$ is provable in \pa" means
\begin{equation}\label{quant}
\pa\ \proves\ \forall x F(x)? 
\end{equation}
In \pa, formula $\forall x F(x)$ is strictly stronger than $\cal F$, since $\forall x F(x)$ yields all of $\cal F$, but $\pa + {\cal F}$ does not necessarily yield $\forall x F(x)$, cf. Proposition \ref{p3}, Section~\ref{shemy}.  As a result, using $(\ref{quant})$ for establishing provability of the property $\cal F$ in \pa\ is a correct and convenient simplification: once $ \pa\ \proves\ \forall x F(x)$, \pa\ verifiably proves all $F(n)$'s.   
The picture is quite different when we try using the failure of $(\ref{quant})$ to claim the unprovability of a serial property. 
The negation of $(\ref{quant})$ is too weak to yield the unprovability of $\cal F$,
\[ \mbox{\it $\pa\not\proves\forall x F(x)$ does not imply that {\cal F} cannot be established by means of \pa.} \]
Indeed, one could {\em a priori} imagine proving each $F(n)$ in a way uniformly verifiable in \pa. 
This would establish the property $\cal F$ in \pa\ without proving $\forall x F(x)$. 
For these reasons, we suggest that using $\forall x F(x)$ in lieu of $\cal F$ is a strengthening fallacy. 
The approach $(\ref{quant})$ cannot be used for establishing unprovability of $\cal F$ in \pa.
\medskip\par
What about using implicit selectors represented by quantifiers rather than constructive operations? Within this approach, for an arithmetical formula $F(x)$ and a serial property ${\cal F}=\{F(0),F(1),\ldots, F(n),\ldots\}$, ``{$\cal F$ is provable in \pa}" means
{``\pa\ proves that each instance of $\cal F$ is provable in \pa"}
which is 
naturally formalized as 
\begin{equation}\label{foralle}
\pa\proves\forall x\Box F^\bullet(x).
\end{equation}
Here ``$\Box$" is the provability predicate in \pa\ (with the suppressed existential quantifier over proofs), cf. Section~\ref{s2},  and $F^\bullet(x)$ is a natural primitive recursive (p.r.) term which for each $n$ returns the G\"odel number $\gn{F(n)}$ of $F(n)$. This approach requires consistency assumptions even for basic tasks. For example, in addition to (\ref{foralle}), it takes 1-consistency of \pa\ to prove the instance provability of $\cal F$, i.e., that for each $n=0,1,2\ldots$, 
\[ \mbox{\it $\pa\proves F(n)$}.
\]
For this reason, approach (\ref{foralle}) does not fit for analyzing provability of consistency. Selector proofs (Section~\ref{sel}) avoid this deficiency by using explicit selector function ``$s(x)$" instead of the existential quantifier on proofs in ``$\Box$":
\begin{equation}\label{forallex}
\pa\proves\forall x[\lc{s(x)}F^\bullet(x)].
\end{equation}
Here ``$u\!:\!w$" is a short for the arithmetical formula {\it ``$u$ is proof of $w$ in $\pa$}," cf. Section~\ref{s2}. Now, (\ref{forallex}) yields that $\pa\proves F(n)$ for each $n=0,1,2,\ldots$, cf.~Proposition~\ref{p2}(ii), Section~\ref{shemy}.

\subsection{Selector proofs}\label{sel}

We assume that formal theories $T$ which we consider have contentual versions $\widehat{T}$ with the same mathematical axioms as $T$. Each formula in the language of $T$ may be regarded as a statement in the language of $\widehat{T}$ as well.  Let $T$ be a formal theory and $D$ a formal derivation of $\varphi$ in $T$. Then we regard $D$ to be a proof of $\varphi$ in $\widehat{T}$ as well. The converse also holds: a given rigorous proof in $\widehat{T}$ may be formalized as a derivation in $T$. 

The following definition represents Hilbert's approach to proving consistency generalized to all serial properties. 
Let  ${\cal F}$ be a serial property $\{ F_0, F_1,\ldots,F_n,\ldots \}$. 

{\it A proof of ${\cal F}$ in a theory $\cal S$} is a pair of
\renewcommand{\labelenumi}{\text{\roman{enumi})}}
\begin{enumerate}
\item a {\it selector} which is an operation that given $n$ provides a proof of $F_n$ in $\cal S$;
\item a {\it verifier} which is a proof in $\cal S$ that the selector does (i). 
\end{enumerate}
We call such pairs (i) and (ii) {\it selector proofs}. 
For the purposes of this work, selectors are assumed to be explicit primitive recursive operations on finite objects. 

This definition works for both formal theories and their contentual versions. If $\cal S$ is a contentual counterpart $\widehat{T}$ of some formal theory $T$, the selector builds a formal derivation of $F_i$ in $T$ to represent a contentual proof of $F_i$ in ${\cal S}$ (which is $\widehat{T}$).
When safe, we allow syntactic parameters other than numerals, including sentences $F_i$ themselves as selector inputs. Furthermore, we routinely identify syntactic objects, such as terms, formulas, derivations, etc., with their numeral codes. 

In Section~\ref{examples}, we provide a body of examples that demonstrate that selector proofs have been  tacitly employed by mathematicians and play a prominent role in metamathematics.

 {\it A natural formalization in \pa\ of a given selector proof of ${\cal F}$ in ${\cal S}$} is a pair $\langle s,v\rangle$ with: 
\renewcommand{\labelenumi}{\text{\roman{enumi})}}
\begin{enumerate}
\item an arithmetical term $s(x)$ formalizing the given selector procedure, 
\item a natural formalization of a given verifier which is a \pa-proof $v$ of 
\[ \forall x [s(x)\!:_{\cal S}\! F^\bullet(x) ].\] 
\end{enumerate}
Here ``$u\!:_{\cal S}\!w$" is a short for the arithmetical formula {\it ``$u$ is proof of $w$ in $\cal S$"} and $F^\bullet(x)$ is a natural p.r. term such that $F^\bullet(n) = \gn{F_n}$. This definition naturally extends from \pa\ to any other sufficiently strong coding theory.

Consider the Brouwer-Heyting-Kolmogorov  treatment of universal arithmetical statements adjusted by so-called Kreisel's ``second clause"  (\cite{DK16,Kre62,Tro69,vD73}, cf. also \cite{AF19}, Section 10.2.1).
{\it A constructive proof} of $\forall x F(x)$ is a pair $\langle s,v\rangle$ where 
\begin{itemize}
\item $s(x)$ is an operation on proofs and 
\item $v$ is a proof that for each $x$, $s(x)$ is a proof of $F(x)$. 
\end{itemize}
So, conceptually, Brouwer-Heyting-Kolmogorov proofs of universal statements are selector proofs.

\subsection{Selector proofs in mathematics}\label{examples}
 
{\bf Example 1.}  Complete Induction principle, $\cal CI$: \textit{for any formula $\psi$,}
\[ \mbox{\it if for all $x$ $[\forall y<x\ \psi(y)$ implies $\psi(x)]$, then $\ \forall x\psi(x)$}.\]
Here is a textbook proof of $\cal CI$: {\it apply the usual \pa-induction to $\forall y< x\ \psi(y)$ to get the $\cal CI$ statement ${\cal CI}(\psi)$ for $\psi$.} 
\mpn
This is a contentual selector proof which, given $\psi$ selects a derivation of ${\cal CI}(\psi)$ in a way that obviously works for any input $\psi$. If needed, this proof can be made formal:
\begin{itemize}
\item pick a primitive recursive selector term $s(x)$ which given $\gn{\psi}$ computes the code of a derivation in \pa\ of ${\cal CI}(\psi)$ above;
\item find an easy proof in \pa\ (verifier) that $s(x)$ works for all inputs\footnote{For convenience, in this and similar cases, assume that the coding is surjective. Here this means that each numeral $n$ is a code of some $\psi$.}
\[ \pa\proves\forall x[s(x)\!:\!C\!I^{\bullet}(x)]\] 
with $C\!I^{\bullet}(x)$ a natural p.r. term such that $C\!I^{\bullet}(\gn{\psi})=\gn{{\cal CI}(\psi)}$. 
\end{itemize}
\mpn
{\bf Example 2}.  
{\em The product of polynomials is a polynomial.}\footnote{To be precise, by polynomial in arithmetic we understand a term 
$a_nx^n+a_{n-1}x^{n-1}+\ldots+a_1x+a_0$ 
with some default rule of placing the parentheses, where each $a_i$ is a numeral and $x$ a variable. In the product $f\cdot g$ of polynomials $f$ and $g$, ``$\cdot$" stands for the usual multiplication in \pa.}
\mpn
Here is its standard mathematical proof:
{\it given a pair of polynomials $f,g$, using the well-known formula, calculate coefficients of the product polynomial $p_{f\cdot g}$, and prove in arithmetic that}
\begin{equation}\label{f.g}
f\!\cdot\! g = p_{f\cdot g}.
\end{equation}
This is a contentual selector proof: for each $f,g$, it finds a proof of (\ref{f.g}) in \pa.\footnote{Proving (\ref{f.g}) in \pa\ by defining the product of polynomials $f$ and $g$ as ``the normal form of the \pa-product $f\!\cdot\! g$" simply hides the same selector procedure.}
To formalize this proof in \pa, we can build 
\begin{itemize} 
\item a p.r. term $\mbox{\it Product}^\bullet (x,y)$ such that $\mbox{\it Product}^\bullet (\gn{f},\gn{g})$ is $\gn{f\! \cdot\! g\!=\! p_{f\cdot g}}$;  
\item a natural p.r. selector $s(x,y)$ such that 
$s(\gn{f},\gn{g})$ is the G\"odel number of a \pa-derivation of   
$f\! \cdot\! g\!=\! p_{f\cdot g}$. 
\end{itemize}
By direct formalization of the above reasoning, \pa\ proves
\[ \forall x,y [s(x,y)\!:\!\mbox{\it Product}^\bullet(x,y)].\] 
{\bf Example 3}. Take the double negation law, DNL\footnote{or any other tautology containing propositional variables.}: \textit{for any arithmetical formula $X$, }
\vskip-5pt
\begin{equation}\label{dML}
X\imp \neg\neg X. 
\end{equation}
The standard proof of DNL in \pa\ is {\it for given $X$, build the usual logical derivation $D(X)$ of {\em (\ref{dML})} in \pa. }
This is a contentual selector proof which builds an individual \pa-derivation for each instance of DNL in a way that provably works for any input $X$. 
This proof can be easily formalized in \pa\ as a derivation of 
\[
\forall x [s(x)\!:\!{\it DNL}^{\bullet}(x)].
\]
Here ${\it DNL}^\bullet(x)$ is a natural coding term such that 
${\it DNL}^\bullet(\gn{X})=\gn{X\imp \neg\neg X}$.
The selector $s(x)$ is a natural p.r. term such that $s(\gn{X})=\gn{D(X)}$. 
 
\medskip\par
So, selector proofs have been used as proofs in arithmetic of serial properties 
\[ \{ F(u)\} \]
in which $u$ is a syntactic parameter (ranging over terms, formulas, derivations, etc.) and $F(u)$ is an arithmetical formula for each $u$. Given $u$, such a  proof selects a \pa-derivation for $F(u)$ with subsequent, possibly informal or default, verification of this procedure. 
Within this tradition, we have to accept selector proofs as legitimate logic objects. In this case, the proof of \pa's consistency within \pa\ from Section~\ref{conproof} is a natural consequence.

If we don't accept selector proofs, we have to admit that such basic mathematical facts as induction, propositional tautologies, product of polynomials, etc., are not provable in arithmetic. Moreover, that consistency is unprovable for the same reason: selector proofs are not included.

\section{G\"odel's incompleteness and Hilbert's program}\label{s2}

Some notational conventions. 
\mpn
$\bullet$ We use ``$\bot$" to denote a standard arithmetical contradiction $(0\!=\!1)$. 
\mpn
$\bullet$ $\mbox{${\it Proof}(x,y)$ (or ``$x\!:\!y$" for short)}$
is the primitive recursive {\it proof predicate} in \pa:
\[ \mbox{{\it ``$p$ is a \pa-proof of $\varphi$"} means $\gn{p}\!:\!\gn{\varphi}$}. \]
We will omit $\gn{\ }$'s when safe and write $\gn{p}\!:\!\gn{\varphi}$ as $p\!:\!\varphi$. 
\mpn
$\bullet$ {\it The provability predicate} ${\it Provable}(y)$ is $\exists x(\lc{x}y)$. We also write $\Box\varphi$ for ${\it Provable}(\varphi)$.
\mpn
$\bullet$ The {\it \pa-consistency formula} ${\sf Con}_{\sf PA}$ can be written in either notation:
\[ \mbox{$\neg{\it Provable}(\bot)\ \ $ or $\ \ \forall x\neg{\it Proof}(x,\bot)\ \ $ or $\ \ \forall x\neg \lc{x}\bot\ \ $ or $\ \ \neg\Box\bot$. } \]

G\"odel's second incompleteness theorem, {\bf G2} (\cite{God31}) yields that 
${\sf Con}_{\sf PA}$
is not derivable in \pa\ (given that \pa\ is consistent). 
Together with the widely accepted {\it Formalization Principle}, {\bf FP}, stating 
``any rigorous reasoning within the postulates of \pa\
can be formalized as a derivation in \pa,"
this prompted a popular opinion that 
\begin{equation}\label{main}
 \mbox{{\bf G2} + {\bf FP} yields that 
\pa\ cannot prove its own consistency.} 
\end{equation}
We argue that (\ref{main}) is not warranted. Indeed, suppose there is a proof of the \pa-consistency in $\widehat{\pa}$. By {\bf FP}, it can be formalized as a derivation in \pa. However, we cannot claim that this formalization is necessarily a \pa-derivation of the given formula ${\sf Con}_{\sf PA}$.  The problem is that there might be nonequivalent formalizations of \pa-consistency and its proofs. 

Hilbert's epsilon substitution method, HES, is a selector proof toolkit and selector proofs do not derive standard consistency formulas. Imagine that HES were successful in proving the consistency of \pa\ in \pa. This would yield a \pa-proof of the arithmetical formula 
${\sf HCon}_{\sf PA}$ stating  
\begin{quote}
{\it ``for each derivation $D$ the epsilon-term elimination process on $D$ terminates with success and produces a \pa-proof that $D$ does not contain $\bot$"}
 \end{quote}
rather than a \pa-proof of ${\sf Con}_{\sf PA}$. 
Moreover, there are no reasons to assume that ${\sf HCon}_{\sf PA}$ would yield ${\sf Con}_{\sf PA}$ in \pa. 
In a similar situation, the successful selector proof of \pa-consistency in Section~\ref{conproof} produces a \pa-derivation of the formula ${\sf SCon}_{\sf PA}$ stating
\begin{quote}
{\it ``for each derivation $D$ the given selector produces a \pa-proof that $D$ does not contain $\bot$."}
 \end{quote}
Such ${\sf SCon}_{\sf PA}$ is provable in \pa, but ${\sf Con}_{\sf PA}$ is not. 
\medskip\par
The above observations allow us to conclude that
\begin{enumerate}
\item Despite the predominant opinion, HES is not precluded by G\"odel's second incompleteness theorem. This is because HES does not attempt to prove the consistency formula ${\sf Con}_{\sf PA}$. Hilbert himself, in his introduction to \cite{HB34} in 1934, stated that G\"odel's work did not refute his consistency program. Here is the English translation of this quote taken from Feferman's \cite{Fef08}: 
\begin{quote}
``... I would like to emphasize the following: the view ... which maintained that certain recent results of G\"odel show that my proof theory can't be carried out, has been shown to be erroneous. In fact that result shows only that one must exploit the finitary standpoint in a sharper way for the farther reaching consistency proofs."
\end{quote}
Only recently (\cite{Kri22}), Kripke argued that HES could not succeed in proving consistency of \pa\ in \pa\ by reasons not related to G\"odel's second incompleteness theorem. 
\item Contrary to popular opinion, the unprovability of consistency paradigm in the form 
\begin{quote}
``there exists no consistency proof of a system that can be formalized in the system itself" (\cite{EB20}, {\it Encyclop{\ae}dia Britannica}) 
\end{quote}
has been unwarranted. Moreover, this paradigm is now proven to be incorrect: selector proofs which can be traced back to Hilbert, have eventually succeeded in proving the consistency of \pa\ in \pa, cf. Section~\ref{conproof}.
\end{enumerate}
This gap in the unprovability of consistency argument can be illustrated by the following question asked by M.~Vardi  in 2021 in a private discussion on the provability of consistency paper \cite{Art19}.
\mpn
{\bf Example 4}. Let ${\sf Con}_{T}$ be the consistency formula for an arithmetical theory $T$: \\ 
$\forall x\ \mbox{\it ``$x$ is not a $T$-derivation of $\bot$."}$
Consider theories:
\[ \pa_0=\pa,\ \ \ \ \ \  \pa_{i+1}=\pa_i+{\sf Con}_{{\sf PA}_i},\ \ \ \ \ \ \pa^\omega=\bigcup_{i\in\omega}\pa_{i}
 .\]
{\bf Q}: What is wrong with the following consistency proof of $\pa^\omega$ by means of $\pa^\omega$?
\begin{quote}
{\it Let $D$ be a derivation in $\pa^\omega$. Find $n$ such that  $D$ is a derivation in $\pa_{n}$. ${\sf Con}_{{\sf PA}_{n}}$ -- one of the postulates of $\pa^\omega$ -- implies that $D$ does not contain $\bot$.}
\end{quote}
{\bf A}: This is a fine selector proof of consistency of $\pa^\omega$ in $\pa^\omega$. The selector that, given $D$, computes the code of a $\pa^\omega$-derivation of {\it ``$D$ does not contain $\bot$"}  is primitive recursive and so simple that its verification in \pa\ is straightforward. 

On the other hand, by {\bf G2},  
$\pa^\omega$ cannot prove the consistency formula ${\sf Con}_{\pa^\omega}$.  This observation, however,  does not harm the above consistency proof because that proof does not prove ${\sf Con}_{\pa^\omega}$. A complete formalization in \pa\ of the consistency proof from this example is a derivation of the formula 
\[ \forall x [s(x)\!:_{\omega}\!\neg x\!:_{\omega}\bot]
\]
with ``$u\!:_{\omega}\!v$" being a proof predicate in $\pa^\omega$ and $s(x)$ a natural term for the aforementioned primitive recursive selector function.

In Section~\ref{conproof}, we offer a selector proof of \pa-consistency in \pa\ (first appeared in \cite{Art19}).
This undermines the aforementioned ``unprovability of consistency paradigm." 

\section{Proving \pa-consistency in \pa}\label{conproof}



We provide a selector proof of \pa-consistency in $\widehat{\pa}$ (the contentual version of \pa) and 
then formalize this selector proof in \pa. The principal idea of the proof is to internalize the \pa-reflexivity proof in \pa. Here is a proof sketch.
\medskip\par
{\bf Step 1.} Inspect the proof of reflexivity of \pa\ (cf. \cite{HP17}):

\begin{quote}
{\it Let $\pa\!\upharpoonright_n$ be the fragment of \pa\ with the first $n$ axioms. Then for each numeral $n$, $\pa\vdash \mbox{\sf Con}_{\pa\upharpoonright_n}.$}
\end{quote}
From this proof, extract a natural primitive recursive function that given $n$ builds a  derivation of $\mbox{\sf Con}_{\pa\upharpoonright_n}$ in \pa. Note that $\mbox{\sf Con}_{\pa\upharpoonright_n}$ yields in \pa\ that
\[\mbox{\it ``$D$ is not a proof of $\bot$" }\]
for any specific derivation $D$ in $\pa\!\upharpoonright_n$.

{\bf Step 2} -- a contentual selector proof of the consistency of \pa. Given a \pa-derivation $D$, find $n$ such that $D$ is a derivation in $\pa\!\upharpoonright_n$ (an easy primitive recursive procedure). By Step 1, \pa\ proves {\it ``$D$ is not a proof of $\bot$"} and the code of this proof can be calculated by a primitive recursive selector $s(x)$ from the code of $D$. 

{\bf Step 3} -- internalization. A natural internalization of Step 1 and Step 2 in \pa\ yields the desired verification of the selector in \pa:
\[ \pa\proves \forall x[s(x)\!:\! \neg x\!:\!\bot ].\]
\subsection{A more detailed proof}
A more detailed proof of consistency for \pa\ in \pa\ goes by two consecutive stages.  
\begin{enumerate}
\item A contentual selector proof of \pa-consistency in its original form (\ref{defcon}). 
Given a \pa-derivaton $D$, we find an arithmetical invariant ${\cal I}_{D}$,
establish that ${\cal I}_{D}(\varphi)$ holds for each $\varphi$ in $D$,
${\cal I}_{D}(\bot)$ does not hold, hence $\bot$
does not occur in ${D}$. Note that {\bf G2} prohibits having such an invariant ${\cal I}$ uniformly for all derivations $D$ but does not rule out having an individual invariant ${\cal I}_D$ for each $D$.
\item A comprehensive formalization of (i) in \pa: define the invariants ${\cal I}_{D}$ in \pa, formalize the selector $s(x)$ as a p.r. term and prove in \pa\ that for each $x$, $s(x)$ is the code of a  \pa-proof of  {\it ``$x$ does not contain $\bot$."}
\end{enumerate}
We will be reasoning in $\widehat{\pa}$.

\medskip\par
In the metamathematics of first-order arithmetic, there is a well-known construction called \textit{partial truth definitions} 
(cf. \cite{Buss98, HP17, Pud98, Smo85}). Namely, for each $n=0,1,2,\ldots$ we build, in a primitive recursive way, a $\Sigma_{n+1}$-formula 
\[ \mbox{\it Tr}_n(x,y), \]
called \textit{truth definition for $\Sigma_{n}$-formulas}, which satisfies natural properties of a truth predicate.
Intuitively, if $\varphi$ is a $\Sigma_{n}$-formula and $y$ is a sequence encoding values of the parameters in $\varphi$, then 
$\mbox{\it Tr}_n(\varphi,y)$ defines the truth value of $\varphi$ on $y$. We allow ourselves to drop ``$y$" and to write $\mbox{\it Tr}_n(\varphi)$ when $\varphi$ does not have parameters. The following proposition is well-known in proof theory of \pa.
\begin{Prop}\label{p1} {\it For any $\Sigma_{n}$-formula $\varphi$, the following Tarksi's condition holds and is provable in \pa\ (and in $\widehat{\pa}$):
\begin{equation}\label{Tarski}
\mbox{\it Tr}_n(\varphi,y)\ \leftrightarrow\ \varphi(y).
\end{equation} 
In particular, \pa\ (as well as $\widehat{\pa}$) proves $\neg\mbox{\it Tr}_n(\bot)$.}
\end{Prop}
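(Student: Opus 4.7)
The plan is to construct $\mbox{\it Tr}_n$ by recursion on the arithmetized syntax of $\Sigma_n$-formulas and then establish (\ref{Tarski}) by induction on formula complexity. First I would recall that satisfaction for $\Delta_0$-formulas is primitive recursive: given a bounded formula and an assignment $y$ to its free variables, one decides truth by a straightforward bounded computation over the subformula tree, evaluating terms using the standard primitive recursive evaluation function. This furnishes $\mbox{\it Tr}_0(x,y)$ as a $\Delta_1$-formula, and the Tarski biconditional $\mbox{\it Tr}_0(\varphi,y)\leftrightarrow\varphi(y)$ is provable in \pa\ for each specific $\Delta_0$-formula $\varphi$ by routine induction on its structure.

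For the inductive step, assuming $\mbox{\it Tr}_{n-1}$ has been defined and satisfies the corresponding biconditional, I would write a $\Sigma_n$-formula in the normal form $\exists z\,\psi(z,y)$ with $\psi\in\Pi_{n-1}$, and set
\[ \mbox{\it Tr}_n(x,y) \;:=\; \exists z\;\neg\mbox{\it Tr}_{n-1}(\mbox{\it neg\/}(\mbox{\it sub\/}(x,z)),y), \]
where \emph{neg} and \emph{sub} are the primitive recursive operations for negation and substitution of codes. This is visibly $\Sigma_{n+1}$. The Tarski biconditional (\ref{Tarski}) is then proved by induction on the number of quantifier alternations in $\varphi$: for each specific $\varphi$ one unfolds the definition of $\mbox{\it Tr}_n$ against the actual structure of $\varphi$ and applies the inductive hypothesis. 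The uniform version, quantifying over codes of $\Sigma_n$-formulas, is obtained by formalizing exactly this induction inside \pa, using that ``$x$ is a code of a $\Sigma_n$-formula'' is primitive recursive. Detailed constructions appear in the cited \cite{Buss98, HP17, Pud98, Smo85}.

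For the final clause, note that $\bot$ abbreviates $0\!=\!1$, which is atomic and hence $\Sigma_0$. By (\ref{Tarski}) we have $\mbox{\it Tr}_n(\bot)\leftrightarrow (0\!=\!1)$, so $\neg\mbox{\it Tr}_n(\bot)$ follows from the \pa-provable fact $\neg(0\!=\!1)$. The same argument carries over verbatim to $\widehat{\pa}$ since it shares its mathematical axioms with \pa.

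The main obstacle is purely bookkeeping rather than conceptual: one must be careful to arrange the recursion so that the definition of $\mbox{\it Tr}_n$ remains $\Sigma_{n+1}$ (avoiding a blow-up in quantifier complexity) and so that coding operations like term evaluation, substitution, and assignment handling behave provably in \pa. These technical details are entirely standard, which is why the proposition is quoted as well-known.
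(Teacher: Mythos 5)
Your proposal is correct and is essentially the standard construction of partial truth definitions (satisfaction for $\Delta_0$-formulas, then recursion on quantifier alternations with the complexity bookkeeping you describe), which is exactly what the paper relies on: the paper gives no proof of Proposition~\ref{p1}, quoting it as well known and deferring to \cite{Buss98, HP17, Pud98, Smo85}. Your external induction on the structure of $\varphi$ also yields the primitive recursive map from $\varphi$ to a \pa-proof of (\ref{Tarski}) that the paper needs immediately afterwards, so nothing is missing.
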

\mpn
The proof of Proposition~\ref{p1} yields a natural primitive recursive function which given $n$ and $\varphi$ calculates the code of a \pa-proof of $(\ref{Tarski})$.
Note that proofs in Proposition~\ref{p1} are rigorous mathematical arguments without any meta-mathematical assumptions about \pa. 

\medskip\par
Given a \pa-derivation $D$, build a proof in $\widehat{\pa}$ that $D$ does not contain $\bot$.
\begin{enumerate}
\item take the conjunction $\widetilde{D}$ of the universal closures of all formulas from $D$ and calculate large enough $n$ to satisfy conditions of Proposition~\ref{p1} for $\widetilde{D}$ as $\varphi$;
\item by Proposition~\ref{p1}, find a proof of $\widetilde{D} \rightarrow \mbox{\it Tr}_n(\widetilde{D})$;
\item build an easy proof of $\widetilde{D}$;
\item from (ii) and (iii), generate a proof of $\mbox{\it Tr}_n(\widetilde{D})$; 
\item by Proposition 1, find a proof of $\neg\mbox{\it Tr}_n(0\!=\!1)$;
\item from (iv) and (v), conclude that $D$ does not contain $\bot$. 
\end{enumerate}
This is a mathematically rigorous selector proof of consistency of \pa\ in $\widehat{\pa}$. It describes a p.r. selector function $s$ which given $D$ produces a proof in $\widehat{\pa}$ of the statement {\it ``$D$ does not contain $\bot$."} 
This proof is step-by-step formalizable in \pa.  

\hfill $\Box$

\section{Further observations}\label{discussion} 

As was shown, \pa\ selector proves the consistency of \pa. Whether  \pa\ can selector prove the consistency of $\pa+ {\sf Con}_{\sf PA}$?  As was noticed by Kurahashi~\cite{Kur19} and Sinclaire \cite{Sin19}, independently, the answer is ``no."

Let  $\lc{x}_{\mbox{\it\tiny T}}\ \!\varphi$ be the natural primitive recursive proof predicate in a theory $T$ and 
$\Box_{\mbox{\it\tiny T}}\ \!\varphi$ denote {\it``$\varphi$ is provable in $T$,"}  i.e., $\exists x (\lc{x}_{\mbox{\it\tiny T}}\ \!\varphi)$. Then  ${\sf Con}_{\mbox{\it\tiny T}}$ is $\neg\Box_{\mbox{\it\tiny T}}\bot$. We drop the subscript when $T=\pa$.

If \pa\ selector proves the consistency of a theory $T$, then for some selector $s(x)$, 
\[ \pa \proves \forall x [s(x)\!:\!\neg x\!:_T\!\bot] .
\]
After weakening this claim by switching from ``$s(x)\!:$" to ``$\Box$", we get 
\[ \pa \proves \forall x \Box\neg x\!:_T\!\bot .
\]
For $T=\pa\ + {\sf Con}_{\sf PA}$, by internalized {\it Modus Ponens}, we obtain 
\begin{equation}\label{?!?}
\pa\proves\forall x\Box\neg\lc{x}\Box\bot. 
\end{equation}
This yields 
\begin{equation}\label{???}
\pa\proves\Box\Box\bot\imp\Box\bot. 
\end{equation}
Indeed, reason in \pa\ and assume $\Box\Box\bot$, i.e., 
$\exists x(\lc{x}\Box\bot)$. 
By a strong form of provable $\Sigma_1$-completeness,
$$
\pa\proves \lc{x}\Box\bot \imp\Box \lc{x}\Box\bot,
$$
and we would have $\exists x\Box \lc{x}\Box\bot$. From this and (\ref{?!?}), we get $\Box\bot$. However, it is easy to show that (\ref{???}) does not hold. So, \pa\ cannot selector prove the consistency of $\pa + {\sf Con}_{\sf PA}$.

\subsection{Consistency of extensions of \pa}
Consider a theory $T$ in the language of arithmetic such that $T\supset \pa$. Does $T$ prove its own consistency? Yes, it does.
Indeed, consider a derivation $D$ in $T$ and let $\widetilde{D}$ be the conjunction of universal closure of all formulas from $D$. For an appropriate $n$, 
\[ \pa\proves \widetilde{D}\imp \mbox{\it Tr}_n(\widetilde{D}).\] 
Since $T\proves \widetilde{D}$, 
\[ T\proves  \mbox{\it Tr}_n(\widetilde{D}).\] 
Since $T$ proves  $\neg\mbox{\it Tr}_n(\bot)$, $T$ proves that $\bot$ is not in $D$. 
\mpn
This reasoning defines a primitive recursive selector that for each $D$ builds a $T$-proof of {\it``$\bot$ is not in $D$."} The natural internalization of this reasoning in \pa\ (hence in $T$), confirms that this is a selector proof of $T$-consistency in $T$. 
\mpn
Selector proofs methods seem to apply to {\sf ZF} and its extensions. Note that {\sf ZF} is \textit{essentially reflexive}. Namely, any theory $T\supseteq {\sf ZF}$ proves the standard consistency formula for each of its finite fragments $\{\varphi\}$: given $\varphi$ we constructively build the proof of $\mbox{\sf Con}_{\{\varphi\}}$ in $T$. 
This suggests a natural selector proof of consistency of any such $T$ in $T$: given a derivation $\cal D$ in $T$, calculate the conjunction $\widetilde{\cal D}$ of all formulas in $\cal D$ and find a proof of $\mbox{\sf Con}_{\{\widetilde{\cal D}\}}$ in $T$. This yields that $T$ proves that $\cal D$ does not contain contradictions. This proof is naturally formalizable in {\sf ZF}, hence in $T$. 
\mpn
As before, the metamathematical value of such a proof depends on how much trust we invest into $T$ itself. 
Speaking informally, taking $T$ as a mathematical ``trusted core," we can establish its consistency without extending this trusted core. Under the traditional unprovability of consistency paradigm, we cannot prove the consistency of $T$ without invoking new principles, which leads to an unlimited growth of the trusted core, the so-called ``reflection tower."
Our findings eliminate such ``reflection towers."   Moreover, we can envision a new norm in the foundations ``a sufficiently rich theory can formally establish its own consistency."

\section{Basic theory of selector proofs}\label{shemy}

Mathematical practice operates with serial properties such as induction, product of polynomials, propositional tautologies, etc., for which there were given selector proofs within contentual arithmetic, but no special theory of selector proofs has been developed.  The study of consistency as a serial property required a deeper analysis. Let us sketch some general theory of selector proofs in \pa\ for serial properties generated by an arithmetical formula. 

\subsection{Arithmetical schemes and their proofs}\label{5_1}

\begin{definition}
Let $\varphi(x)$ be an arithmetical formula with a designated variable $x$. By $\{\varphi(x)\}$ we denote a serial property
\[ \{ \varphi(0), \varphi(1), \varphi(2), \ldots ,\varphi(n), \ldots  \} \]
which we call {\em scheme}. 
From Section~\ref{sel}, a {\em proof of a scheme $\{\varphi(x)\}$} in \pa\ is a pair $\langle s, v\rangle$ where 
\begin{itemize}
\item $s$ is a primitive recursive term (\textit{selector}),
\item $v$ is a \pa-proof of  $\ \forall x [s(x)\!:\!\varphi(x)]$ (\textit{verifier}).
\end{itemize} 
\end{definition}
We have shown that the {\em consistency scheme} $\{\neg\lc{x}\bot\}$ which is a serial property 
\[ \{\neg\lc{0}\bot, \neg\lc{1}\bot, \neg\lc{2}\bot, \ldots, \neg\lc{n}\bot, \ldots \},\]
 has a proof in \pa. 
\medskip\par
It is easy to observe that the following properties of proofs of schemes hold. 
\begin{itemize}
\item Proofs of schemes in \pa\ are finite syntactic objects. 
\item The proof of a scheme in \pa\ predicate
\[ \mbox{``$\langle s, v\rangle$ is a proof of scheme $\{\varphi\}$"}
\]  
is decidable. 
\item The set of provable in \pa\ schemes is recursively enumerable.
\end{itemize}
\begin{definition}
A scheme $\{\varphi\}$ is 
\begin{itemize}
\item {\em provable in \pa} if it has a proof in \pa, 
\item {\em strongly provable in \pa} if $\pa\proves\forall x \varphi(x)$, 
\item {\em instance provable in \pa} if $\pa\proves \varphi(n)$, for each $n=0,1,2, \ldots$. 
\end{itemize}
\end{definition}

\begin{Prop}\label{p2}
$\ $

\begin{enumerate}
\item ``Strongly provable in \pa" yields ``provable in \pa,"
\item ``provable in \pa" yields ``instance provable in \pa."
\end{enumerate}
\end{Prop}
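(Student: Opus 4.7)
For part (i), suppose the scheme $\{\varphi(x)\}$ is strongly provable, i.e., there is an actual \pa-derivation $D$ of $\forall x\, \varphi(x)$. Define the selector $s(x)$ to be the primitive recursive operation that, given $n$, outputs the code of the derivation obtained by appending to $D$ the logical axiom $\forall x\, \varphi(x) \imp \varphi(\overline{n})$ followed by a modus ponens step yielding $\varphi(\overline{n})$. This is plainly a primitive recursive term in $x$. The verifier is then a routine formalization in \pa\ of the preceding sentence: since the proof predicate $u\!:\!v$ is $\Delta_1$-decidable and the construction of $s(x)$ from the fixed parameter $D$ is uniform in $x$, \pa\ straightforwardly proves $\forall x\, [s(x)\!:\!\varphi(x)]$.

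For part (ii), suppose $\langle s, v\rangle$ proves the scheme, so $v$ is a \pa-derivation of $\forall x\, [s(x)\!:\!\varphi(x)]$. Fix any natural number $n$. By universal instantiation inside \pa,
\[
\pa\proves s(\overline{n})\!:\!\varphi(\overline{n}).
\]
Since $s$ is primitive recursive, externally compute $k = s(n)$. Because \pa\ proves all true equations between closed primitive recursive terms, $\pa\proves s(\overline{n}) = \overline{k}$, and therefore $\pa\proves \overline{k}\!:\!\varphi(\overline{n})$. As $u\!:\!v$ is $\Delta_0$-decidable, if $k$ were not an actual code of a \pa-derivation of $\varphi(n)$, then $\neg(\overline{k}\!:\!\varphi(\overline{n}))$ would be a true $\Delta_0$ sentence, hence provable in \pa, contradicting the consistency of \pa. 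Thus $k$ is the code of a genuine \pa-derivation of $\varphi(n)$, witnessing $\pa\proves \varphi(n)$.

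Part (i) is a mechanical formalization of universal instantiation and should offer no real resistance. The only delicate passage — the step I expect to require care in part (ii) — is moving from formal provability of the closed atomic claim $\overline{k}\!:\!\varphi(\overline{n})$ to the existence of an honest external \pa-derivation of $\varphi(n)$. This relies on the $\Delta_0$-decidability of the proof predicate together with the tacit consistency assumption on \pa, which together forbid \pa\ from proving false atomic statements about specific proof codes.
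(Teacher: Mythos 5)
Your proposal is correct and follows essentially the same route as the paper: for (i) you make explicit the ``easy transformation of proofs'' (instantiation plus modus ponens appended to the given derivation of $\forall x\,\varphi(x)$), and for (ii) you use exactly the paper's argument that if $s(n)$ were not a genuine proof of $\varphi(n)$, the true primitive recursive sentence $\neg[s(\overline{n})\!:\!\varphi(\overline{n})]$ would be provable and contradict \pa's consistency (which the paper cites as having been established earlier by its own selector proof). No gaps; the extra step you isolate about evaluating $s(\overline{n})=\overline{k}$ inside \pa\ is implicit in the paper's shorter wording.
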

\begin{proof}
\medskip\par
i) Suppose $q$ is a proof of $\forall x \varphi(x)$, i.e., 
\[  \lc{q}\forall x \varphi(x).\]
By an easy transformation of proofs, find a primitive recursive term $s(x)$ such that \pa\ proves 
\[ \forall x[\lc{s(x)}\varphi(x)].\]
Let $v$ be a \pa-proof of the latter, 
\[ \lc{v}\forall x[\lc{s(x)}\varphi(x)],\]
then $\langle s, v\rangle$ is a proof of the scheme $\{\varphi\}$.
\medskip\par
ii) Let $\langle s, v\rangle$ be a proof of the scheme $\{\varphi\}$ and $n$ be a numeral.  
Since \pa\ proves  $$\forall x[\lc{s(x)}\varphi(x)]$$ we also have that  \pa\ proves $$\lc{s(n)}\varphi(n).$$
Then $s(n)$ is a proof of $\varphi(n)$. Indeed, otherwise $\neg[\lc{s(n)}\varphi(n)]$ holds and, as a p.r. sentence it is provable in \pa. This would yield a specific derivation of a contradiction in \pa\ which was proved to be impossible earlier.  
\end{proof}
\begin{Coro} The proof of {\em Proposition~\ref{p2}(ii)} naturally extends from schemes to all serial properties. Hence \pa\ does not prove serial propertiers containing $\bot$.
\end{Coro}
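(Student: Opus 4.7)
The plan is to lift the argument of Proposition~\ref{p2}(ii) verbatim from schemes $\{\varphi(x)\}$ to arbitrary serial properties ${\cal F}=\{F_0,F_1,F_2,\ldots\}$. Recall that by definition (Section~\ref{sel}), a proof of $\cal F$ in \pa\ is a pair $\langle s,v\rangle$ with $s$ a primitive recursive selector and $v$ a \pa-derivation of
\[ \forall x[\lc{s(x)}F^\bullet(x)],\]
where $F^\bullet(x)$ is a p.r. term satisfying $F^\bullet(n)=\gn{F_n}$. The only feature of schemes that entered the proof of Proposition~\ref{p2}(ii) was the availability of such a coding term, so the same argument will go through.

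Concretely, I would carry out the following steps. First, fix a proof $\langle s,v\rangle$ of $\cal F$ in \pa\ and an index $n$. From $v$, by instantiation, \pa\ proves $\lc{s(n)}F^\bullet(n)$, i.e., \pa\ proves the arithmetical sentence $\lc{s(n)}\gn{F_n}$. Second, observe that whether a given numeral codes a \pa-derivation of a given formula is a decidable, hence primitive recursive, property; if $s(n)$ were in fact not a code of a \pa-derivation of $F_n$, then the true p.r. sentence $\neg\lc{s(n)}\gn{F_n}$ would itself be provable in \pa, which together with the previous item would yield a specific \pa-derivation of $\bot$. Third, invoke the selector proof of \pa-consistency from Section~\ref{conproof}: the existence of such a derivation has been ruled out. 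Therefore $s(n)$ really is a \pa-proof of $F_n$, and in particular $\pa\proves F_n$. Since $n$ was arbitrary, $\cal F$ is instance provable in \pa.

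For the ``hence'' clause, suppose $\cal F$ is a serial property containing $\bot$, say $F_k=\bot$ for some $k$. Assume for contradiction that $\cal F$ has a proof $\langle s,v\rangle$ in \pa. By the extension of Proposition~\ref{p2}(ii) just established, $\pa\proves F_k$, i.e., $\pa\proves\bot$, again contradicting the consistency established in Section~\ref{conproof}. Hence no such $\langle s,v\rangle$ exists.

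There is no real technical obstacle here; the argument is essentially a bookkeeping extension of Proposition~\ref{p2}(ii). The one point that deserves emphasis, and which I would highlight in the write-up, is that the step ``$s(n)$ is actually a proof of $F_n$'' uses the consistency of \pa\ as an external fact, and this fact is now legitimately available thanks to the selector proof of Section~\ref{conproof}. So the corollary sits on top of, and implicitly invokes, the main result of the paper.
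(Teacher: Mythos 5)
Your proposal is correct and follows essentially the same route as the paper: the corollary is stated without a separate proof precisely because it is the verbatim lift of Proposition~\ref{p2}(ii) that you carry out, including the key step that a false $\lc{s(n)}F^\bullet(n)$ would make the true p.r.\ sentence $\neg\lc{s(n)}F^\bullet(n)$ provable and hence yield a specific derivation of a contradiction, ruled out by the consistency argument of Section~\ref{conproof}. Your closing remark that the corollary implicitly rests on that consistency proof matches the paper's own phrasing (``which was proved to be impossible earlier'').
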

\begin{Coro} Proving serial properties does not add any new theorems to \pa.
\end{Coro}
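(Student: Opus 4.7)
The plan is to show that this Corollary follows almost immediately from the preceding one. Let $\mathcal F = \{F_0, F_1, F_2, \ldots\}$ be an arbitrary serial property that has a selector proof $\langle s, v\rangle$ in \pa. The first step is to invoke the just-established extension of Proposition~\ref{p2}(ii) from schemes to serial properties: for every $n$, the primitive recursive term $s$ applied to $n$ evaluates to the code of a genuine \pa-derivation of $F_n$, so each instance $F_n$ is already a bona fide theorem of \pa.

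The second step is a routine conservativity argument. Interpret the statement as asserting that augmenting the derivational apparatus of \pa\ with the right to cite any selector-proved serial property as a premise yields no new single-formula theorems. Any derivation $\Pi$ of a formula $\psi$ in this enriched system is a finite syntactic object and therefore appeals to only finitely many instances $F_{n_1}, \ldots, F_{n_k}$ drawn from finitely many selector-proved serial properties. By the first step, each $F_{n_i}$ already has a concrete \pa-derivation (namely $s(n_i)$ for the appropriate selector $s$); I would splice these derivations into $\Pi$ in place of every citation of an $F_{n_i}$ to produce a pure \pa-derivation of $\psi$. Hence $\pa\proves\psi$.

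There is no genuine obstacle. The only point meriting care is pinning down what ``adding serial-property proofs to \pa'' means, but once one reads it in the natural way — closure of \pa\ under the use of instances of already selector-proved serial properties as lemmas — the conservativity conclusion drops out of instance provability with no further work. This also aligns with the opening remark of the paper that selector proofs formalize Hilbert's two-stage approach without strengthening the underlying theorem set of the base theory.
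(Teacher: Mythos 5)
Your argument is correct and matches the paper's intent: the Corollary is stated as an immediate consequence of the preceding extension of Proposition~\ref{p2}(ii) to all serial properties, which gives exactly your first step (each instance $F_n$ already has the ordinary \pa-derivation coded by $s(n)$). Your second, splicing step is a harmless elaboration of the same point and introduces nothing beyond what instance provability already delivers.
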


\begin{Prop}\label{p3}
$\ $ 
\begin{enumerate}
\item ``Instance provable" does not yield  ``provable,"

\item ``provable" does not yield ``strongly provable."
\end{enumerate}
\end{Prop}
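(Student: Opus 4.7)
The plan is to exhibit explicit counterexample schemes, each drawing directly on results already established in the paper, so that no substantially new machinery is required.

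For (ii), I would take $\varphi(x) := \neg\lc{x}\bot$, i.e., the consistency scheme $\{\neg\lc{x}\bot\}$ for $\pa$ itself. By the main construction of Section~\ref{conproof}, this scheme is provable in $\pa$ in the sense of Section~\ref{5_1}: the argument produces an explicit primitive recursive selector $s$ together with a $\pa$-verifier of $\forall x[s(x)\!:\!\neg\lc{x}\bot]$. On the other hand, ``strongly provable'' for this scheme means $\pa\proves\forall x\,\neg\lc{x}\bot$, which is literally the standard consistency formula ${\sf Con}_{\sf PA}$. By G\"odel's second incompleteness theorem (applied to the consistent theory $\pa$), $\pa\not\proves{\sf Con}_{\sf PA}$. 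Hence the scheme $\{\neg\lc{x}\bot\}$ witnesses the failure of the implication in (ii).

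For (i), I would take $\varphi(x) := \neg\lc{x}_{{\sf PA}+{\sf Con}_{\sf PA}}\bot$, the consistency scheme of the extension $\pa+{\sf Con}_{\sf PA}$. First I would verify instance provability: for each numeral $n$, $\varphi(n)$ is a $\Delta_0$ (in fact primitive recursive) statement about a specific numeral. Since ${\sf Con}_{\sf PA}$ is true in the standard model and $\pa$ is sound, $\pa+{\sf Con}_{\sf PA}$ is consistent, so $\varphi(n)$ is true for every $n$, and $\pa$ proves every true $\Delta_0$ sentence; hence $\pa\proves\varphi(n)$ for each $n$. Next I would cite the argument recorded in Section~\ref{discussion} (attributed there to Kurahashi and Sinclaire): if $\pa$ selector-proved this scheme, then via the passage from $s(x)\!:$ to $\Box$ and internalized modus ponens with ${\sf Con}_{\sf PA}$, one would derive $\pa\proves\Box\Box\bot\imp\Box\bot$, which is a form of reflection known to fail. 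Consequently the scheme is instance provable but not provable in $\pa$, establishing (i).

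The main obstacle is not the logical content but the bookkeeping at the boundary between ``provable'' and ``instance provable'': one must ensure that the instance-provability claim in (i) uses only $\pa$'s capacity to prove true $\Delta_0$ facts about fixed numerals, and does \emph{not} implicitly invoke a uniform verifier (which would undermine the whole separation). Once this distinction is kept clean, both parts reduce cleanly to the two cornerstone results of the paper: the selector proof of ${\sf Con}_{\sf PA}$ in Section~\ref{conproof} (for the ``provable'' side of (i) and (ii)) and the negative result of Section~\ref{discussion} together with G2 (for the ``not provable''/``not strongly provable'' sides).
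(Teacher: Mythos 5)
Your proposal is correct and follows essentially the same route as the paper: for part (ii) you use the identical counterexample (the consistency scheme $\{\neg\lc{x}\bot\}$, provable by the Section~\ref{conproof} construction but not strongly provable by {\bf G2}), and for part (i) the negative half rests on the very same Kurahashi--Sinclaire argument of Section~\ref{discussion}. The only cosmetic difference is your choice of scheme in (i) --- the consistency scheme of $\pa+{\sf Con}_{\sf PA}$ rather than the paper's interdeducible ``unprovability of inconsistency'' scheme $\{\neg\lc{x}\Box\bot\}$ --- and both choices reduce to the same unprovability of $\forall x\Box\neg\lc{x}\Box\bot$, with instance provability in each case argued from soundness plus $\Sigma_1$-completeness exactly as in the paper.
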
 
\begin{proof}
$\ $

i)  Consider the \pa-consistency scheme 
$ \{\neg\lc{x}\bot\}.$ 
As we have shown, this scheme is provable in \pa. 
By {\bf G2}, $\pa\not\proves\forall x \neg\lc{x}\bot$, hence the scheme $\{\neg\lc{x}\bot\}$ is not strongly provable in \pa. 
\medskip\par
ii)  The scheme $$\{\neg\lc{x}\Box\bot\}$$ which we suggest calling the \textit{unprovability of inconsistency scheme}
is instance provable since  $\neg\lc{n}\Box\bot$
is true for each $n=0,1,\ldots$, and hence provable in \pa\ as a true primitive recursive sentence. 
On the other hand, the unprovability of inconsistency scheme is not provable in \pa. Indeed, suppose the scheme $\{\neg\lc{x}\Box\bot\}$ is provable in \pa. Then for some selector term $s$, 
\[ \pa\proves\forall x [\lc{s(x)}\neg\lc{x}\Box\bot]. \]
By an easy \pa-reasoning, we get 
\[ \pa\proves\forall x\Box\neg\lc{x}\Box\bot,\] 
which is impossible by the aforementioned Kurahashi-Sinclaire observation. 
\end{proof}

\subsection{Proof of consistency scheme but not consistency property}\label{ConNotCon}

There are additional subtleties in proving consistency presented as a scheme. Consider the following primitive recursive function $s(x)$ which given $x$ returns a \pa-proof of $\neg\lc{x}\bot$:
\begin{quote}
{\it
Given $x$, check whether $x$ is a proof of $\bot$ in \pa. If ``yes," then put $s(x)$ to be $x$ followed by a simple derivation of $\neg\lc{x}\bot$ from $\bot$. If ``no," then use provable $\Sigma_1$-completeness and put $s(x)$ to be a constructible derivation of $\neg\lc{x}\bot$ in \pa. 
}
\end{quote}
Let $v$ be an obvious \pa-derivation of $\forall x[\lc{s(x)}\neg\lc{x}\bot]$. Consider two questions: 
\begin{enumerate}
\item Whether $\langle s, v\rangle$ is a proof of the scheme $\{\neg\lc{x}\bot\}$ in \pa?
\item Whether $\langle s, v\rangle$ is a formalized selector proof of \pa-consistency? 
\end{enumerate}

The answer to (i) is obviously ``yes" since $\langle s, v\rangle$ fits the definition of a proof of the scheme $\{\neg\lc{x}\bot\}$ in \pa.

The answer to (ii) is ``no." For the affirmative answer, each $s(n)$, as a contentual argument, should be a $\widehat{\pa}$-proof that $n$ does not contain $\bot$. This condition is not met: $s(n)$ only tells us that if $n$ contains $\bot$, we would still be able to offer a fake proof of $\neg\lc{n}\bot$. So, $s(n)$ is not a proof that $n$ does not contain $\bot$.

This example exposes some differences between \pa\ and the contentual arithmetic $\widehat{\pa}$. Whereas formal \pa-derivations encode correct $\widehat{\pa}$-proofs, it can take a contentual judgement to decide whether such proofs satisfy informal requirements. 


\section{Discussion}


The first mathematically complete version of this paper was the preprint \cite{Art19} in which a selector proof of \pa-consistency in \pa\ was given. The current exposition is a slim version of \cite{Art19} which takes into account connections with Hilbert's works on consistency proofs. For further results and surveys cf. \cite{Che21,Che22,Kus21,SSK23,Wil21}.

Representing \pa-consistency by the arithmetical formula ${\sf Con}_{\sf PA}$ has deviated from the original Hilbert's views of consistency which only require that each of the statements 
\begin{equation}\label{standard}
\{\neg\lc{0}\bot, \neg\lc{1}\bot, \neg\lc{2}\bot, \ldots, \neg\lc{n}\bot, \ldots \}
\end{equation}
holds. {\it A priori} the consistency property is concerned about consistency of combinatorial objects -- formal derivations -- represented by their numeral codes. At the same time, the universal quantifier in ${\sf Con}_{\sf PA}$, 
\[ \forall x\neg\lc{x}\bot,\] 
speaking model-theoretically, spans over both, standard and nonstandard numbers and it states something (much) stronger than the desired validity of (\ref{standard}), namely, that  $\neg\lc{x}\bot$ holds for both standard and nonstandard $x$'s. The latter fails in some \pa-models, in which $c\!:\!\bot$ holds for some nonstandard $c$.
 
In contrast, the selector proof respects the original serial character of consistency (\ref{standard}) and the consistency proof in \pa\ boils down to a formal derivation of 
\begin{equation}\label{boils}
\forall x[s(x)\!:\!\neg\lc{x}\bot]
\end{equation}
for an appropriate selector term $s(x)$.
Since the selector is primitive recursive, it is total, and for each standard $n$ returns a standard proof $s(n)$ of $\neg\lc{n}\bot$, which is sufficient for claiming consistency. Accidentally, (\ref{boils}) holds for nonstandard numbers as well, but the ``proofs" $s(c)$ of $\neg c:\!\bot$ for nonstandard $c$'s are themselves nonstandard. 

Our findings also free the verification framework from some ``impossibility" limitations. Imagine that we want to verify a $\Pi_1$-formula
\begin{equation}\label{ver}
\forall x Q(x)
\end{equation}
for some p.r. predicate $Q(x)$ by proving (\ref{ver}) in \pa. 
Given G\"odel's second incompleteness theorem, in addition to provability of (\ref{ver}) in \pa, one needs some consistency assumptions about \pa\ to conclude that $Q(n)$ holds for each $n=0,1,2,\ldots$. Since it was believed that these additional assumptions could not be verified in \pa, this, strictly speaking, left a certain foundational loophole.
In our framework, \pa\ proves its own consistency, and hence these additional meta-assumptions can be dropped. Proving (\ref{ver}) formally in \pa, as we have seen, is certified as a self-sufficient verification method.
\medskip\par
There is a long history of suggestions on bypassing G\"odel's second incompleteness theorem, for example, \cite{Det79,Det86,Det01,Wil21,Zach07}. The approach to proving consistency adopted in this paper is different from them and can be traced back to Hilbert ideas. 

Our findings show the following:
\begin{itemize}
\item Hilbert's epsilon substitution method is a selector proof approach to establishing consistency. Such proofs are not precluded by G\"odel's second incompleteness theorem since they do not aim at deriving the consistency formula. The popular view that G\"odel's theorems invalidate Hilbert's consistency program has been unwarranted.
\item We give a selector proof of \pa-consistency in \pa\ and extend this result to other theories containing \pa. This shows the ``unprovability of consistency paradigm" incorrect. 
\end{itemize}

\section{Thanks}
Thanks to
\medskip\par
Arnon Avron, Lev Beklemishev, Sam Buss, Walter Carnielli, Nachum Dershowitz, Michael Detlefsen, Michael Dunn, Hartry Field, Mel Fitting, Richard Heck, Carl Hewitt, John H. Hubbard, Reinhard Kahle, Karen Kletter, Vladimir Krupski, Taishi Kurahashi, Yuri Matiyasevich, Richard Mendelsohn, Eoin Moore, Andrei Morozov, Larry Moss, Anil Nerode, Elena Nogina, Vincent Peluce, Andrei Rodin, Chris Scambler, Sasha Shen, Richard Shore, Morgan Sinclaire, Thomas Studer, Albert Visser, Dan Willard, Noson Yanofsky, and many others.


\end{document}